\documentclass[a4paper,10pt]{amsart}
\usepackage[T1]{fontenc}
\usepackage[utf8]{inputenc}
\usepackage{epsfig}
\usepackage{amssymb,amsmath,amsthm,mathrsfs,mathtools,amscd}
\usepackage{float}
\usepackage[figtopcap]{subfigure}
\usepackage{pgfplots}
\usepackage{todonotes}
\usepackage[bookmarks=true]{hyperref}

\DeclareMathOperator{\dive}{div}

\providecommand{\inva}[1]{\text{~\textup{d}} #1}
\providecommand{\ltp}[2]{\bigl ( #1, #2 \bigr )_{L^2}}

\providecommand{\norm}[1]{\lVert#1 \rVert}
\def\trp{{\mathsf{T}}}

\def\VO{\ensuremath{{V}^0}}
\def\VI{\ensuremath{{V}^1}}

\def\Ltod{\ensuremath{L^2(\Omega;\mathbb R^{2})}}
\def\Hood{\ensuremath{H^1(\Omega;\mathbb R^{2})}}

\def\Hthe{H^{3/2+\varepsilon_0}(\Omega; \mathbb R^{2})}

\def\Hinf{\ensuremath{H_\infty}}

\def\mhamz{\ensuremath{{\mathcal M \hat{\mathcal A}}_{-}(0)}}
\def\Czp{\ensuremath{\mathbb C_0^+}}

\def\gai{\ensuremath{\Gamma_{\text{in}}}}
\def\gao{\ensuremath{\Gamma_{\text{out}}}}
\def\gac{\ensuremath{\Gamma_1}\cup\ensuremath{\Gamma_2}}
\def\gaco{\ensuremath{{\Gamma}_1}}
\def\gact{\ensuremath{{\Gamma}_2}}
\def\gaci{\ensuremath{{\Gamma_i}}}
\def\gaw{\ensuremath{\Gamma_w}}

\def\infv{\ensuremath{v}}
\def\infp{\ensuremath{p}}

\def\vd{\ensuremath{v_\delta}}
\def\vstst{\ensuremath{v_\infty}}
\def\pstst{\ensuremath{p_\infty}}
\def\pd{\ensuremath{p_\delta}}

\def\dv{\ensuremath{\delta_v}}
\def\dA{\ensuremath{\delta_A}}
\def\dG{\ensuremath{G_\delta}}
\def\dM{\ensuremath{\delta_M}}
\def\dN{\ensuremath{\delta_N}}


\newtheorem{thm}{Theorem}[section]
\newtheorem{cor}[thm]{Corollary}
\newtheorem{lem}[thm]{Lemma}
\newtheorem{ass}[thm]{Assumption}

\theoremstyle{remark}

\theoremstyle{remark}
\newtheorem{rem}[thm]{Remark}

\theoremstyle{definition}

\theoremstyle{plain}

\newtheoremstyle{named}{}{}{\itshape}{}{\bfseries}{.}{.5em}{#1 \thmnote{#3}}
\theoremstyle{named}

\theoremstyle{plain}

\theoremstyle{plain}

\floatstyle{ruled}
\newfloat{algorithm}{h}{loa}[section]
\floatname{algorithm}{Algorithm}

\title[Robust Stabilization of Oseen Systems]{Convergence of Coprime Factor Perturbations for Robust Stabilization of Oseen Systems} 
\author{Jan Heiland}

\begin{document}
\maketitle
\begin{abstract}
    Linearization based controllers for incompressible flows have been proven to work in theory and in simulations. To realize such a controller numerically, the infinite dimensional system has to be linearized and discretized. The unavoidable consistency errors add a small but critical uncertainty to the controller model which will likely make it fail, especially when an observer is involved. Standard robust controller designs can compensate small uncertainties if they can be qualified as a coprime factor perturbation of the plant. We show that for the linearized Navier-Stokes equations, a linearization error can be expressed as a coprime factor perturbation and that this perturbation smoothly depends on the size of the linearization error. In particular, improving the linearization makes the perturbation smaller so that, eventually, standard robust controller will stabilize the system.
\end{abstract}

\section{Introduction}

Linearization based compensator design for stabilizing a steady state of the Navier-Stokes equations has been thoroughly investigated in theory, see, e.g. \cite{Ray06} for early fundamental results and \cite{HeHZ18} for a recent extension to observer design, and proved to work in numerical simulations \cite{BenH15}. In view of applications that come with various sources of unmodelled effects, however, standard observer-based controllers are likely to fail because of their inherent lack of robustness \cite{Doy78} even in the finite-dimensional case.

For this paper we consider the situation that the infinite dimensional model can be stabilized via a linearization based controller. In order to realize a stabilizing control in a simulation, two approximation steps are imminent: the discretization of the continuous states both of the plant and the controller and a computation of the linearization. Both approximations can be arbitrarily close but will always introduce an error in the model used for the controller design. In this work we focus on the linearization error. We show that this error can be qualified via perturbations in coprime factorizations of the transfer functions which can be compensated for via \Hinf~controller design; see \cite{McFG90} for the finite dimensional and \cite{CurZ95} for the infinite dimensional case.

For that we consider a simulation setup (Section \ref{sec-problem-setup}) for the control of the \emph{flow around a cylinder}, which is a widely used benchmark for flow simulations \cite{SchT96}. We relax the modelled \emph{Dirichlet} control conditions towards \emph{Robin}-type conditions which allows for a direct integration in the variational formulations. We show that in this particular setup, which, however, is readily extendable to other flow control problems, the associated \emph{Oseen} operator generates an analytic semigroup and that the input operator is bounded (Section \ref{sec-oseen-lti}). 
The boundedness is then used to infer the existence of a transfer function in frequency domain whereas the analyticity provides additional regularity of the solutions in time domain. For an output operator that makes the system uniformly detectable, we then show that a perturbation to the \emph{Oseen} linearization amounts to a perturbation of coprime factors of the transfer function (Section \ref{sec-transfer-cpf}). We characterize those perturbations and, by means of the analyticity and a state-space realization, show that the coprime factor perturbations tend to zero in the relevant norm as the linearization error approaches zero. 

The implication of this general result is that standard $\Hinf$-robust output based controllers that are computed on the base of an inexact linearization will eventually stabilize the nominal system. 

The \emph{Oseen} linearization as a base for controller design for the incompressible Navier-Stokes equations has been thoroughly analysed as, e.g., in \cite{Ray06,Ray07a,NguR15} for Dirichlet boundary control in two and three dimensions and with mixed boundary conditions, in \cite{Bad06} in two and three dimensions including space discretizations and Riccati-based state feedback, in \cite{HeHZ18} in view of observer design, in \cite{Rav07} with extension methods for the Dirichlet control, and in the textbook \cite{Bar11} with fundamental results on spectral properties. As for robust control, the \emph{Oseen} linearization has been investigated in \cite{DhaRT11, Bar11} in view of the operator $\Hinf$-Riccati equations. Separately, the need and applicability for $\Hinf$-robust controllers to account for discretization or linearization errors in flow control setups have been discussed in \cite{BenH16,BenH17}.

For general distributed parameter systems, the design and advantages $\Hinf$-robust controllers have been thoroughly treated in \cite{CurZ95} and analysed with respect to discretization and model reduction errors \cite{Cur03,Cur06,PauP19}. Those results were directed to systems with \emph{bounded} input and output operators. For more general system class, the question of robust stabilizability and stable factorizations of transfer functions was treated in \cite{Cur90}. Finally, we mention the most general comprehensive notion for infinite-dimensional systems, namely \emph{well-posed} or \emph{regular} linear systems as treated in the fundamental work \cite{Sal87}, in the text books \cite{Sta05,TucW09}, and in the recent survey paper \cite{TucW14} that also covers some nonlinear systems and the Navier-Stokes equations. 

With this paper we contribute two aspects to the existing theory: First, we review general results on regularity and controller design for the \emph{Oseen} system and adapt them to the practically relevant setup with \emph{Robin}-type boundary conditions and inexactness in the linearization. Second, derive a state space realization of the coprime factor perturbations to the transfer function that are caused by inexact linearizations. We then use the regularity to prove that these perturbations scale with the inexactness in the linearization in the relevant norm. While this smooth dependency is readily established in finite dimensional state space systems, in infinite dimensions, this has not been addressed yet.

\section{Problem Setup}\label{sec-problem-setup}

\begin{figure}
		\pgfmathsetmacro{\domfac}{7.5}
	\pgfmathsetmacro{\domx}{1.*\domfac}
	\pgfmathsetmacro{\domy}{.4*\domfac}
	\pgfmathsetmacro{\cylrad}{.05*\domfac}
	\pgfmathsetmacro{\cylcx}{.3*\domfac}
	\pgfmathsetmacro{\cylcy}{.2*\domfac}
	\begin{tikzpicture}
		\draw (\cylcx, \cylcy) circle (\cylrad);
		\draw [white,thick,domain=30:60] plot ({\cylcx+\cylrad*cos(\x)}, {\cylcy+\cylrad*sin(\x)});
		\draw [white,thick,domain=300:330] plot ({\cylcx+\cylrad*cos(\x)}, {\cylcy+\cylrad*sin(\x)});
		\node at (\cylcx+\cylrad,{\cylcy+\cylrad*cos(30)}) [above] {${\Gamma_1}$};
		\node at (\cylcx+\cylrad,{\cylcy-\cylrad*cos(30)}) [below] {${\Gamma_2}$};
		\node at (\cylcx-\cylrad,\cylcy) [left] {$\Gamma_w$};
		\draw (0,0) -- node[below]{$\Gamma_w$}(\domx,0) ;
		\draw (\domx,\domy) -- node[above]{$\Gamma_w$}(0,\domy);
		\draw [dashed] (\domx,0) -- node[right]{$\gao$}(\domx,\domy);
		\draw [dashed] (0,\domy) -- node[right]{$\gai$}(0,0);
	\end{tikzpicture}
	\caption{Computational domain of the cylinder wake.}
	\label{fig:cyldom}
\end{figure}
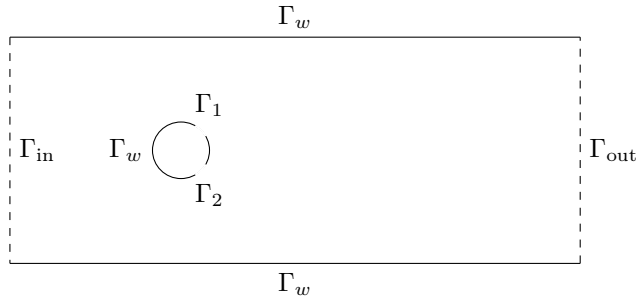
Before we define the semigroup setting, we introduce the considered Navier-Stokes Equations in strong form for an example application. We only consider the two-dimensional case and comment on the extension to three dimensions whenever the results do not simply carry over.

\subsection{Derivation of the System Equations}
We consider the flow of an incompressible fluid in a domain $\Omega\subset \mathbb R^{2}$ with a boundary $\Gamma = \gai \cup \gaw \cup \gao \cup \gaco \cup \gact$, where \gai~stands for the inflow boundary, \gaw~stands for physical walls, \gao~denotes the outflow boundary, and \gaco~and \gact~denote two boundaries where controls can be applied. An example is the cylinder wake with two boundary controls at the cylinder periphery as illustrated in Figure \ref{fig:cyldom}. 

For time $t>0$, we describe the system via the evolution of the velocity $\infv$ and the pressure $\infp$ of the flow via the Navier-Stokes equations

\begin{subequations}\label{eq-ininse}
	\begin{align}
        \dot \infv +(\infv \cdot \nabla) \infv+ \dive \sigma(\infv, \infp)  &=  0, \\
		\dive \infv &= 0 ,\quad \text{in } \Omega,
	\end{align}
\end{subequations}
with the stress tensor 
\begin{equation*}
    \sigma (\infv, \infp) := \nu (\nabla \infv + {\nabla \infv}^\trp ) - \infp I,
\end{equation*}
where the parameter $\nu>0$ is the \emph{dynamic viscosity}\footnote{We assume that the density of the fluid is constant and equal to one}.

As for the inflow and outflow boundary conditions, we apply
\begin{equation*}
    \infv = -ng_0  \text{ on }\gai \quad\text{and}\quad \sigma(\infv, \infp)n =0 \text{ on }\gao,
\end{equation*}
where $g_0$ models the spatial shape of the inflow profile and where $n$ denotes the outward normal vector to the boundary. As for the controls, we prescribe
\begin{equation}\label{eq-nse-diri-cont-bcs}
    \infv = - ng_1\cdot u_1  \text{ on }\gaco \quad\text{and}\quad \infv =-ng_2\cdot u_2 \text{ on }\gact.
\end{equation}
Here, the control input is modelled via the shape functions $g_1$ and $g_2$ and via scalar functions of time $u_1$ and $u_2$ as the control parameters. 

At the walls we assume \emph{no-slip} conditions, that is 
\begin{equation*}
    \infv = 0\text{ on }\gaw.
\end{equation*}

In a system-theoretic formulation, the control of partial differential equations through Dirichlet boundary conditions as \eqref{eq-nse-diri-cont-bcs} does not fit the general setup of $(A,B,C)$ systems with bounded $B$ and $C$; see \cite[Ch. 3.3]{CurZ95}. For such models the notions of \emph{regularity} and  \emph{well-posedness} have been developed, see, e.g., \cite{GuoZ05} for an overview and a positive example. We avoid these difficulty by relaxing the present Dirichlet boundary \eqref{eq-nse-diri-cont-bcs} conditions to Robin-type conditions:
\begin{equation}\label{eq-nse-rob-cont-bcs}
    \infv = -ng_i u_i - \gamma \sigma ( \infv , \infp )n \text{ on }\Gamma_i, \quad i=1,2,
\end{equation}
with a parameter $\gamma$ that is supposed to be small, cf., e.g., \cite{HouR98} for convergence properties of this relaxation in optimal control of stationary flows.

Next we assume that the corresponding steady state equations with zero control input have a solution $(\vstst, \pstst)$ and we consider the difference states

\[
    \vd := \vstst - \infv \quad\text{and}\quad \pd := \pstst - \infp
\]
subject to the linearized dynamics

\begin{subequations}\label{eq-lin-ininse}
	\begin{align}
        \dot \vd +(\vd \cdot \nabla) \vstst+(\vstst \cdot \nabla) \vd + \dive \sigma(\vd, \pd)  &=  0, \\
		\dive \vd &= 0 ,\quad \text{in } \Omega,\\
    \intertext{with boundary conditions}
         \sigma( \vd , \pd )n +  \frac{1}{\gamma}  \vd &= - \frac{1}{\gamma} ng_i\cdot u_i  \text{ on }\gaci \quad\text{for } i=1,2 \\
    \vd &= 0 \text{ on }\gai\cup \gaw \\
    \sigma(\vd, \pd)n&=0 \text{ on }\gao. \label{eq-lin-ininse-robinbc}
	\end{align}
\end{subequations}



System \eqref{eq-lin-ininse} is commonly referred to as \emph{Oseen linearization} and provides a suitable model for controller design for incompressible flows around the working point; see, e.g. \cite{Son98} for a general discussion of the \emph{linearization principle} and \cite{Ray06,Ray07b} for analytical results confirming that the linear controller will locally stabilize the associated nonlinear Navier-Stokes equations.

\subsection{Weak Formulation}

By testing with a test function $w\in L^2(\Omega)$ that is sufficiently smooth, divergence free, and zero at the Dirichlet boundaries, one derives the standard weak formulation 
\begin{equation}\label{eq-osee-weak}
    \ltp{\dot \dv}{w} + a_\gamma(\dv, w) + b_\infty (\dv, w) = -\frac{1}{\gamma}(\int_{\gaco} g_1 u_1 w \inva s + \int_{\gact} g_2 u_2 w \inva s)
\end{equation}
from system \eqref{eq-lin-ininse}, with
\begin{subequations}\label{eq-lin-ac-forms}
    \begin{align}
        a_\gamma(v, w):&= \int_\Omega \nu (\nabla v +\nabla v^\trp):(\nabla w + \nabla w^\trp) \inva x + \frac{1}{\gamma} \int _{\gac} vw \inva s,\\
        b_\infty (v, w):&= \int_\Omega ((\vstst \cdot \nabla)v)\cdot w + ((v\cdot \nabla)\vstst)\cdot w \inva x.
    \end{align}
\end{subequations}

\subsection{Notation and Basic Definitions}

For  $\Omega \subset \mathbb R^{2}$ denoting a domain, we consider the \emph{Sobolev} spaces $\Ltod$~-- the space of $\mathbb R^{2}$ valued functions that are square integrable over $\Omega$ -- and $H^s( \Omega;\mathbb R^{2} )$ -- the subspace of $\Ltod$ of functions for which also all derivatives up to order $s$ are contained in \Ltod~(in the case that $s$ is a positive integer). 
For positive $s\in\mathbb R\setminus \mathbb N^{}$, the space $H^s( \Omega;\mathbb R^{2} )$ can be defined consistently; see \cite{Rou05} for an introduction into \emph{Sobolev} spaces and the definitions of the corresponding norms. 
Here, we will use subscripts to depict the various norms whereas $\ltp{\cdot}{\cdot}$ will denote the $\Ltod$ inner product. 

As for the treatment of the flow equations, we introduce subspaces of divergence free functions that are zero at the Dirichlet boundaries
\begin{align*}
	\VI &:= \{z \in \Hood: \dive z = 0 \text{ and } z\bigr | _{\gaw\cup\gai} = 0 \}, \\
	\VO &:= \{z \in \Ltod: \dive z = 0 \text{ and } z\cdot n\bigr | _{\gaw\cup\gai} = 0 \};
\end{align*}
see, e.g. \cite{Ray06}, which are closed subspaces of $\Hood$~and \Ltod~in the corresponding norms. Note that by the identity mapping $\VI$  is continuously and densely embedded in $\VO$. We use the notation $\VI\hookrightarrow \VO$ to stress on this topological inclusion as opposed to the algebraical subset.
Furthermore, \VO~is a closed subspace of the Hilbert space \Ltod, which, among others, means that there exists an orthogonal projector 
\begin{equation*}
	\Pi\colon \Ltod \text{ onto } \VO \subset \Ltod,
\end{equation*}
and that we can identify $\VO$ with its dual space $ {\VO}^*$. 


For two Banach spaces $V_1$ and $V_2$, we denote the space of bounded linear operators from $V_1$ to $V_2$ by $\mathcal L(V_1, V_2)$. Also we will refer to the notion of \emph{strongly continuous semigroup} or $C_0$-semigroup as defined, e.g., in \cite[Def. 2.1.2]{CurZ95}, and to an operator $A$ being the \emph{generator of an analytic  $C_0$-semigroup} where \emph{generator} is defined, e.g., in \cite[Def. 2.1.8]{CurZ95} and \emph{analyticity of a $C_0$-semigroup}, e.g, in \cite[Def. II-1.2.1]{BenDDM07}.

Throughout the manuscript, a lower case $c$ with, possibly, a subscript will denote a positive constant.

For generic linear time-invariant systems, we will use the shortcut $(A,B,C)$ to refer to a state space realization. To connect it to a transfer function, we will write $G\sim(A,B,C)$. We will consider stable transfer functions $G\in\mhamz$ that, by definition, are also contained in $\mathcal M\hat{\mathcal A}(0)$ and are thus holomorphic and bounded on the right half plane $\Czp$; see \cite[Def. 7.1.4, Lem. A.7.47]{CurZ95}. Accordingly $G\in\mhamz$ can be measured in terms of the \Hinf-norm
\begin{equation*}
    \|G\|_{\Hinf} = \sup_{s\in \Czp}\|G(s)\|_{\mathcal L(U, Y)},
\end{equation*}
where $U$ and $Y$ are the input and the output space.
%
%


\section{Oseen as Linear System}\label{sec-oseen-lti}

For the linearization point that defines the \emph{Oseen} system \eqref{eq-osee-weak} we make the following regularity assumption.
\begin{ass}\label{ass-vstst-hthe}
    The weak solution $\vstst$ to the steady-state with no control action and their approximation $\vstst+\dv$  exist and fulfill 
    \begin{equation*}
        \vstst,\;\vstst+\dv \in \Hthe,
    \end{equation*}
    for some $\varepsilon > 0$.
\end{ass}

This assumption is in line with the assumptions made in \cite[(H5)]{NguR15}. Establishing the validity of this assumption for given setups is a delicate task. In the appendix of \cite{NguR15}, the authors provide reasoning for the existence of $\vstst$ in a similar setup like the one considered here (see Figure \ref{fig:cyldom}). Once existence is confirmed, regularity can be derived from the general results presented in \cite{MazR09}.

Informally, we introduce the \emph{Oseen} operator $A$ as
\begin{equation*}
    A\colon D(A)\subset \VO \to \VO \colon v\mapsto \Pi(\nu \Delta v - (\vstst \cdot \nabla)v - (v\cdot \nabla)\vstst)
\end{equation*}
where 
    \begin{equation}\label{eq-oseen-dom-def}
        D(A)=\{ v \in H^{3/2+\varepsilon_0}(\Omega; \mathbb R^2): \exists p \in H^{3/2+\varepsilon_0}(\Omega;\mathbb R^{} ) \text{ s.t. } \dive \sigma(v, p) \in L^2( \Omega, \mathbb R^{2} ) \}
    \end{equation}
    for some $\varepsilon_0>0$.

\begin{rem}
    Because of corners in the domain and because of junctions of Dirichlet and Neumann-type conditions, solutions to the related Stokes problem, i.e. the case where $\vstst=0$, might be in $H^{3/2+\varepsilon_0}(\Omega; \mathbb R^2)$, $\varepsilon_0 \in (0, 1/2)$, rather than in $H^2$. Accordingly, this domain of definition has been proposed for a similar setup \cite{NguR15}; cp. also Assumption \ref{ass-vstst-hthe}. Also note that only the existence of such a $p$ is needed without further specification, since the contribution of $p$ in $\sigma(v,p)$ is in the kernel of $\Pi$. 
\end{rem}

Formally, we define $A$ in weak form via 
\begin{equation}\label{eq-weak-oseen-op}
    \ltp{-Av}{w} = a_\gamma(v,w) + b_\infty (v, w) \quad \text{for all }w\in \VI;
\end{equation}
with the forms $a_\gamma$ and $b_\infty$ as defined in \eqref{eq-lin-ac-forms}. Since the projector $\Pi$ is self-adjoint in the $L^2( \Omega; \mathbb R^{2} )$-inner product and since $\VI\subset \VO$ the projection of $A$ is contained in the definition \eqref{eq-def-tb} by virtue of 
\begin{equation*}
    \ltp{-Av}{w} = \ltp{-Av}{\Pi w}= \ltp{-\Pi ^* Av}{w} = \ltp{-\Pi  Av}{w} \quad \text{for all } w\in\VI. 
\end{equation*}
Also note the sign of $A$ that is chosen in view of writing the \emph{Oseen} system as $\dot v= Av+Bu$.

\begin{thm}
    If $\vstst$ fulfills Assumption \ref{ass-vstst-hthe}, then for $\gamma>1/\nu$, the Oseen operator $A$ with Robin-type boundary conditions as defined in \eqref{eq-weak-oseen-op} with domain of definition $D(A)$ as defined in \eqref{eq-oseen-dom-def} generates an analytic $C_0$-semigroup on $\VO$.
\end{thm}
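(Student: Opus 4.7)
My plan is to represent $-A$ as the \VO-realization of the continuous sesquilinear form $a(v,w):=a_\gamma(v,w)+b_\infty(v,w)$ on $\VI\times\VI$, to verify continuity and a $\VI$-Gårding estimate with pivot space \VO, and then to invoke the standard result that the pivot realization of such a form generates an analytic $C_0$-semigroup (see, e.g., \cite[Thm.~II-1.2.12]{BenDDM07}).

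First, I would verify $\VI$-continuity of $a$. The volume part of $a_\gamma$ is bounded by $c\|v\|_{\VI}\|w\|_{\VI}$, and the boundary part by the standard $H^1$-trace inequality $\|v\|_{L^2(\Gamma_i)}\le c_{tr}\|v\|_{\VI}$. For $b_\infty$, Assumption~\ref{ass-vstst-hthe} together with the two-dimensional embedding of $H^{3/2+\varepsilon_0}(\Omega)$ into $W^{1,p}(\Omega)\cap L^\infty(\Omega)$ for some $p>2$ yields, after Hölder's inequality, $|b_\infty(v,w)|\le c\|\vstst\|_{\Hthe}\|v\|_{\VI}\|w\|_{\VI}$.

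Second, I would establish the Gårding estimate. Korn's inequality applies because elements of \VI~vanish on the Dirichlet part $\gaw\cup\gai$ of positive surface measure, giving $a_\gamma(v,v)\ge 2\nu c_K\|v\|_{\VI}^2+\tfrac{1}{\gamma}\|v\|_{L^2(\gac)}^2$; in particular $a_\gamma$ is itself $\VI$-coercive. For the convective part, $\dive\vstst=0$, integration by parts, and the 2D Ladyzhenskaya interpolation $\|v\|_{L^4}^2\le c\|v\|_{\VO}\|v\|_{\VI}$ yield $|b_\infty(v,v)|\le\delta\|v\|_{\VI}^2+c_\delta\|v\|_{\VO}^2$ for every $\delta>0$. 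The hypothesis $\gamma>1/\nu$ should enter at this point to let us absorb, with an explicit positive constant, the trace contributions that the integration by parts of the transport term produces on $\gac$ (where $\vstst\cdot n$ need not vanish) into the viscous coercivity. Choosing $\delta$ small and $\lambda$ sufficiently large then gives
\begin{equation*}
\mathrm{Re}\,a(v,v)+\lambda\ltp{v}{v}\ge c_1\|v\|_{\VI}^2,\qquad v\in\VI.
\end{equation*}

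Finally, I would identify the abstract form realization with the operator $A$ from \eqref{eq-weak-oseen-op}--\eqref{eq-oseen-dom-def} and conclude. The form theory produces an operator whose abstract domain sits densely in \VI; maximal elliptic regularity for the stationary Stokes system with mixed Dirichlet/Robin/Neumann data on a domain with corners and boundary-type junctions, via \cite{MazR09}, pins this domain down to the $\Hthe$-space in \eqref{eq-oseen-dom-def} and explains the drop below $H^2$. Lions' theorem then gives that $-(A-\lambda\,\mathrm{Id})$ generates an analytic $C_0$-semigroup on \VO, and shifting back by the bounded operator $\lambda\,\mathrm{Id}$ preserves analyticity. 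I expect the main obstacle to be this last identification step: coercivity and sectoriality are standard once Korn and Ladyzhenskaya are in hand, whereas matching the form-theoretic domain with the specific corner-regularity $\Hthe$-domain relies on the external analysis of \cite{MazR09} and on carefully tracking how $\gamma>1/\nu$ feeds into the trace estimates used to close the Gårding inequality.
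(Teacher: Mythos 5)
Your overall route is the same as the paper's: verify the hypotheses of the form-generation theorem of \cite{BenDDM07} for $a_\gamma+b_\infty$ on $\VI$ with pivot space $\VO$, get coercivity of $a_\gamma$ from Korn's inequality (the paper invokes Friedrichs on $\gac$ combined with Korn; your variant via the Dirichlet part $\gaw\cup\gai$ works equally well), and control $b_\infty$ by a G\aa rding-type perturbation estimate --- your Ladyzhenskaya interpolation is precisely the two-dimensional ``direct estimate'' that the paper outsources to \cite[Thm. 2.8]{NguR15}. Your closing step (identifying the form realization's domain with \eqref{eq-oseen-dom-def} via \cite{MazR09}) spells out something the paper only asserts implicitly.

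The flaw is in your handling of $b_\infty(v,v)$, specifically the role you assign to the hypothesis $\gamma>1/\nu$. Integrating the transport term by parts produces boundary contributions $\tfrac12\int_{\gac\cup\gao}(\vstst\cdot n)\abs{v}^2\inva s$, and you propose to absorb these into the coercivity using $\gamma>1/\nu$. That cannot work as stated: on $\gao$ there is no Robin term whatsoever to absorb into, and on $\gac$ absorbing $\tfrac12(\vstst\cdot n)\abs{v}^2$ into $\tfrac1\gamma\abs{v}^2$ would require $1/\gamma\gtrsim\norm{\vstst}_{L^\infty(\gac)}$ --- a condition coupling $\gamma$ to the linearization point, not to $\nu$. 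The repair is standard: estimate the trace terms by the multiplicative trace inequality $\norm{v}_{L^2(\Gamma)}^2\le c\,\norm{v}_{\VO}\norm{v}_{\VI}$ and Young's inequality, so that they, too, land in the $\delta\norm{v}_{\VI}^2+c_\delta\norm{v}_{\VO}^2$ budget; or, simpler, skip the integration by parts entirely and estimate $\abs{\int_\Omega((\vstst\cdot\nabla)v)\cdot v\inva x}\le\norm{\vstst}_{L^\infty}\norm{v}_{\VI}\norm{v}_{\VO}$ directly (available since $\Hthe\hookrightarrow L^\infty(\Omega;\mathbb R^2)$ in two dimensions), which produces no boundary terms at all. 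For what it is worth, the hypothesis $\gamma>1/\nu$ is not load-bearing in the paper's proof either: it appears only when a common factor is pulled out of the viscous and Robin parts of $a_\gamma$ (and under that hypothesis the extractable common factor is $1/\gamma=\min(\nu,1/\gamma)$, not $\nu$ as printed there); with your version of Korn the Robin term is simply nonnegative and no relation between $\gamma$ and $\nu$ is needed anywhere.
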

\begin{proof}
    We show that $A$ meets the assumptions of \cite[Ch. 2-I, Thm. 2.12.]{BenDDM07}. For that, we recall that $\VI\hookrightarrow  \VO$ is a continuous embedding and that $D(A)$ is chosen such that $\ltp{-Az}{\cdot}$ defines a continuous functional on $\VO$. Since $\gac$ is of nonzero measure, the \emph{Friedrich's inequality} (see, e.g, \cite[Thm. 1.8]{Nec12}) combined with \emph{Korn's inequality} (\cite[p. 193]{Nec12}) that states the ellipticity for the symmetrized gradient  $\mathcal E_s(v):=\nabla v + \nabla v^T$ gives that the \emph{Stokes} part $a_\gamma$ of $A$ is \VI~coercive:
    \begin{align*}
        a_\gamma(z, z)= &\int_\Omega \nu \mathcal E_s( z) :\mathcal E_s(z) \inva x + \frac{1}{\gamma} \int _{\gac} zz \inva s \\
                        & \geq  \nu \bigl( \int_\Omega \mathcal E_s( z) :\mathcal E_s( z )\inva x + \int _{\gac} zz \inva s\bigr) \\
                        &\geq c_a \|z\|_{\VI}^2,
    \end{align*}
    for all $z\in \VI$ and a constant $c_a$ independent of $z$.  By standard arguments, see e.g. \cite[Thm. 2.8]{NguR15}, it then follows that the bilinear form $a_\gamma + b_\infty$ is $\VI$-$\VO$-coercive so that $A$ as defined in \eqref{eq-weak-oseen-op} is indeed a generator of an analytic $C_0$-semigroup.
\end{proof}

\begin{rem}
    The direct estimate of the form $b_\infty$ that is used in \cite[Thm. 2.8]{NguR15} is specific to two dimensions. In the three dimensional case one may add the reasonable assumption that $\vstst \in L^\infty(\Omega, \mathbb R^{3 } )$ to directly obtain the $\VI$-$\VO$-coercivity of $A$ or establish that $A=A_a+A_b$, where $A_b$ is that part of $A$ that is defined via $b_\infty$, is a closed operator with $D(A_a^{1/2})$ as domain of definition and show analyticity with the help of \cite[Ch. 3, Cor. 2.4]{BenDDM07}.
\end{rem}

For sufficiently smooth shape functions $g_1$ and $g_2$, namely $g_i\in H^{1/2}(\Gamma_i;\mathbb R^{})$, $i=1,2$, we define the input operator $B=\Pi \tilde B$ where $B$ is defined as
\begin{equation}\label{eq-def-tb}
    \tilde Bu(w) = -\frac{1}{\gamma} \sum_{i=1,2} \int_{\gaci}g_iw \inva s u_i
\end{equation}
for the two dimensional input $u=(u_1, u_2)$. Note that, due to the evaluation of the trace, $\tilde B$ is \emph{not bounded} in \Ltod~whereas $B=\Pi \tilde B$ is; see \cite{BenH16}.

As for an output operator we consider $C\in \mathcal L(\VO, \mathbb R^{k}) $ that can be, e.g., $k$ averaged velocity measurements over small subdomains of $\Omega$. By similar arguments that ensured $B\in \mathcal L( \mathbb R^2, \VO)$ also boundary observation could be modelled with operators in $\mathcal L(\VO, \mathbb R^{k})$. Finally, as we will show below, the analyticity of the generated semigroup provides additional regularity so that also output operators that are only bounded from \VI~will not necessarily destroy the well posedness of the system.

By the preceding arguments we can state that the linear system 
\begin{subequations}\label{eq-oseen-as-lti}
    \begin{align}
        \dot v(t) &= Av(t) + Bu(t), \quad v(0) \in \VO \\
        y(t) &= Cv(t)
    \end{align}
\end{subequations}
with $A\colon D(A) \subset \VO \to \VO$ being the generator of an analytic $C_0$-semigroup, with $B\in \mathcal L(\mathbb R^{2}, \VO)$ models the evolution of the difference state due to boundary input $u$  in the linear approximation \eqref{eq-lin-ininse} and is observed via the output operator $C\in \mathcal L(\VO, \mathbb R^{k} )$. 
\begin{rem}
    Due to the boundedness of $B$ and $C$ with respect to the state space $\VO$, system \eqref{eq-oseen-as-lti} is of the form that is treated in \cite{CurZ95}. In order to show smooth dependencies of factorizations of the associated transfer function on system perturbations, we need to extend our considerations to a more general class, where additional state regularity may compensate a certain \emph{unboundedness} of $C$.
\end{rem}

\section{Transfer functions and coprime factorizations}\label{sec-transfer-cpf}

For a system $(A, B, C)$ with bounded input and output operators and with $A$ generating a $C_0$-semigroup, the associated \emph{transfer function}
\begin{equation*}
    G(s) = C(sI-A)^{-1}B \in\mathcal L(U,Y)
\end{equation*}
where $U$ and $Y$ denote the input and output space, respectively,
is well defined for all $s\in \rho_\infty(A)$, where $\rho_\infty(A)$ is the maximal part of the resolvent set of $A$ that contains an interval $[r,\infty)$; see \cite[Lem. 4.3.6]{CurZ95}. 


\begin{thm}[\cite{CurZ95}, Thm. 7.3.8]
    Let $(A,B,C)$ be a linear time-invariant system with state-space $Z$, with input space $\mathbb C^m$ and output space $\mathbb C^k$, and with bounded input operator $B\in\mathcal L(\mathbb C^m,Z)$ and output operator  $C\in\mathcal L(Z, \mathbb C^k)$. If $(A,B,C)$ is exponentially detectable by $L \in \mathcal L(\mathbb C^k, X)$ then 
    \begin{subequations}\label{eq-curz-cpf-formula}
    \begin{align}
        N(s) &= C(sI-A-LC)^{-1}B \\
        M(s) &= I + C(sI-A-LC)^{-1}L
    \end{align}
\end{subequations}
define a (left) coprime factorization $G=M^{-1}N$ of $G$ over $\mathcal M\mathcal A_-(0)$.
\end{thm}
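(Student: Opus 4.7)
The plan is to verify three ingredients in turn: that $N$ and $M$ lie in the stable algebra $\mhamz$, that they satisfy the factorization identity $MG=N$, and that they are left coprime in the sense of admitting a right Bezout identity over $\mhamz$. Throughout I write $R_L(s):=(sI-A-LC)^{-1}$ and $R(s):=(sI-A)^{-1}$.

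First, exponential detectability by $L$ means that $A+LC$ generates an exponentially stable $C_0$-semigroup on the state space. In particular $R_L(s)$ is holomorphic and uniformly bounded on some half-plane $\{\operatorname{Re} s>-\alpha\}$ with $\alpha>0$, hence certainly on $\Czp$. Because $B$, $C$, $L$ are bounded, the compositions $N(s)=CR_L(s)B$ and $M(s)-I=CR_L(s)L$ inherit holomorphy and uniform boundedness on $\Czp$ with values in the appropriate finite-dimensional operator spaces, so $M,N\in\mhamz$.

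Second, I would derive $MG=N$ from the second resolvent identity applied to $A$ and $A+LC$:
\[
    R_L(s)-R(s)=R_L(s)\,LC\,R(s).
\]
Composing with $C$ on the left and with $B$ on the right and reading off the three blocks as $N(s)$, $G(s)$, and $M(s)-I$ gives $N(s)=G(s)+(M(s)-I)G(s)=M(s)G(s)$ on the intersection of the two resolvent sets. Invertibility of $M(s)$ on $\Czp$ is a byproduct: a direct computation using the same resolvent identity shows $M(s)^{-1}=I-CR(s)L$, which is generally unstable (it involves $R(s)$) but well-defined wherever $R(s)$ exists. This legitimizes writing $G=M^{-1}N$ as a meromorphic identity on the half-plane, which extends by stability of $M$ and $N$ to the required statement in $\mhamz$.

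Third, I would establish left coprimeness by exhibiting $\tilde X,\tilde Y\in\mhamz$ with $M\tilde X+N\tilde Y=I$. The candidates are the observer-based formulas in the spirit of Nett-Jacobson-Balas, namely transfer functions built from a state-space system with generator $A+LC$ plus suitable direct feedthrough terms, so that after clearing the common resolvent $R_L(s)$ the Bezout identity collapses to a linear relation among $B$, $C$, $L$ that holds by construction. I expect this last step to be the main obstacle of the proof: in finite dimensions the verification reduces to algebra, but here one must argue that the candidate factors are not merely pointwise bounded on $\Czp$ but genuine elements of $\mhamz$, which requires promoting pointwise estimates to an $\Hinf$-bound using the uniform exponential decay of $e^{t(A+LC)}$ together with boundedness of $B$, $C$, and $L$.
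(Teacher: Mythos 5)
The paper gives no proof of this statement at all --- it is quoted, with the citation \cite{CurZ95}, from Curtain and Zwart --- so the relevant benchmark is the proof given there. Your second step is correct and is the standard computation: the resolvent identity $R_L(s)-R(s)=R_L(s)\,LC\,R(s)$ yields $N=MG$, and $M(s)^{-1}=I-CR(s)L$ settles invertibility. Your first step, however, has a genuine gap: holomorphy plus uniform boundedness on a half-plane $\{\operatorname{Re} s>-\alpha\}$ is \emph{not} what membership in $\mhamz$ means. The Callier--Desoer class $\hat{\mathcal A}_-(0)$ consists, by definition, of Laplace transforms of distributions $f_a+\sum_i f_i\delta_{t_i}$ with $e^{\varepsilon\cdot}f_a\in L^1(0,\infty)$ and $\sum_i \lvert f_i\rvert e^{\varepsilon t_i}<\infty$ for some $\varepsilon>0$; this is a proper subalgebra of the bounded holomorphic functions on $\Czp$, so a pointwise bound on a larger half-plane does not certify membership. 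The proof in \cite{CurZ95} therefore runs through the time domain: since $B$ and $L$ have finite rank and $\lVert e^{(A+LC)t}\rVert\le Me^{-\omega t}$, the impulse responses $Ce^{(A+LC)t}B$ and $Ce^{(A+LC)t}L$ are norm-continuous and exponentially decaying, hence lie in $\mathcal A(-\varepsilon)$ for any $\varepsilon<\omega$; $N$ and $M-I$ are precisely their Laplace transforms ($M$ carrying the extra unit impulse $\delta_0 I$). That argument is what must replace your first paragraph.

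The fatal gap is your third step. Bezout factors ``built from a state-space system with generator $A+LC$ plus feedthrough terms'' do not exist in general; in fact, with detectability as the only hypothesis the coprimeness assertion is false. Take $Z=\mathbb C$, $A=1$, $B=0$, $C=1$, $L=-2$: then $A+LC=-1$ is exponentially stable, $G=0$, and the formulas give $N=0$ and $M(s)=(s-1)/(s+1)$, so any relation $MX+NY=I$ over $\mhamz$ would force $X(s)=(s+1)/(s-1)$, which has a pole at $s=1\in\Czp$. What is missing is exponential stabilizability: the theorem actually proved in \cite{CurZ95} assumes the system is exponentially stabilizable \emph{and} detectable (the paper's paraphrase silently drops the former, though it holds for the Oseen system to which it is applied), and the Bezout factors for the left factorization are built on $A+BF$ with $F$ a stabilizing feedback, not on $A+LC$. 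Concretely, with $R_F(s)=(sI-A-BF)^{-1}$ one may take $X(s)=I-CR_F(s)L$ and $Y(s)=FR_F(s)L$; then $MX+NY=I$ follows by a computation of exactly the same kind as your step two, using $BF-LC=(sI-A-LC)-(sI-A-BF)$ so that $R_L(BF-LC)R_F=R_F-R_L$, and membership of $X,Y$ in $\mhamz$ again follows from the time-domain argument applied to the stable semigroup $e^{(A+BF)t}$. So the obstacle you flagged at the end is real, but it cannot be overcome with the data $(A+LC,B,C,L)$ alone; it requires the additional hypothesis and the feedback-based factors.
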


\begin{rem}
    The class of transfer functions that possess coprime factorizations over $\mhamz$ (see \cite[Def. 7.2.7]{CurZ95}) is called the \emph{Callier-Desoer} class and is well suited for infinite dimensional linear time-invariant systems  since, in particular, it contains transfer functions that are not rational as they occur, e.g., in the modelling of systems with delay.
\end{rem}

A perturbation $\dA$ of $A$, that is still stabilized by $LC$, transfers to the coprime factors as follows; cp. \cite[Thm. 4]{BenH16}.

\begin{thm}\label{thm-cpf-prtbtns}
    Let $(A,B,C)$ and $(A+\dA,B,C)$ be linear time-invariant systems with state space $Z$, input space $\mathbb C^m$, and output space $\mathbb C^k$ and with $B\in\mathcal L(\mathbb C^{m},Z)$ and $C\in \mathcal L(Z, \mathbb C^{k})$. If $(A,B,C)$ and $(A+\dA,B,C)$ are simultaneously exponentially detectable by $L \in \mathcal L(\mathbb C^k, X)$ then 
    for $G \sim (A,B,C)$ and $\dG \sim (A+\dA, B, C)$ it holds that 
    \begin{equation*}
        G=M^{-1}N\quad\text{and}\quad \dG = [M+\dM]^{-1}[N+\dN]
    \end{equation*}
    with $N, M, \dN, \dM \in \mhamz$ and
    \begin{subequations}\label{eq-dndm}
    \begin{align}
        \dN(s) = C\dA(sI-A-LC)^{-1}(sI-A-\dA-LC)^{-1}B,\\
        \dM(s) = C\dA(sI-A-LC)^{-1}(sI-A-\dA-LC)^{-1}L.
    \end{align}
\end{subequations}
\end{thm}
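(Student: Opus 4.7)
The plan is a two-step argument: apply the preceding coprime-factorization theorem (Curtain--Zwart, Thm.~7.3.8 as quoted above) twice, and then reconcile the two resulting factorizations via the second resolvent identity.

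First, since $(A,B,C)$ is exponentially detectable by $L$, Theorem~7.3.8 gives the factorization $G=M^{-1}N$ over $\mhamz$ with $N,M$ as in \eqref{eq-curz-cpf-formula}. The key strengthening in the hypothesis is that $(A+\dA,B,C)$ is detectable by the \emph{same} operator $L$; this is exactly what permits a second application of Theorem~7.3.8 to the perturbed system, producing
\[
\tilde N(s):=C(sI-A-\dA-LC)^{-1}B,\qquad \tilde M(s):=I+C(sI-A-\dA-LC)^{-1}L,
\]
both in $\mhamz$, with $\dG=\tilde M^{-1}\tilde N$. Defining $\dN:=\tilde N-N$ and $\dM:=\tilde M-M$ immediately yields the advertised factorization $\dG=(M+\dM)^{-1}(N+\dN)$, and the inclusions $\dN,\dM\in\mhamz$ follow from closedness of $\mhamz$ under subtraction.

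Second, to put $\dN$ and $\dM$ into the compact form \eqref{eq-dndm}, I would write $T(s):=sI-A-LC$ and $T_\delta(s):=T(s)-\dA$ and invoke the second resolvent identity
\[
T_\delta(s)^{-1}-T(s)^{-1}\;=\;T(s)^{-1}\,\dA\,T_\delta(s)^{-1},
\]
which is valid on $\rho(A+LC)\cap\rho(A+\dA+LC)$. By simultaneous exponential detectability this intersection contains a right half-plane $\{\Re s\ge r\}$, and analytic continuation then extends the identity across all of $\Czp$. Pre-multiplying by $C$ and post-multiplying by $B$ (respectively $L$) produces precisely the expressions in \eqref{eq-dndm}, so that the two factorizations agree with the formulas in the statement.

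The main technical subtlety is the well-posedness of the composition involving the (possibly unbounded) perturbation $\dA$: one needs $T_\delta(s)^{-1}$ to map $Z$ into $D(\dA)$ so that $\dA\,T_\delta(s)^{-1}$ is a bounded operator on $Z$, which holds whenever $D(A+\dA)\subseteq D(\dA)$, a natural premise in the present setting. Once that is granted, the identity above is a pointwise equality of bounded operators on the right half-plane, and neither the qualitative membership $\dN,\dM\in\mhamz$ nor the formulas \eqref{eq-dndm} require any further estimate beyond what is already delivered by the two invocations of Theorem~7.3.8.
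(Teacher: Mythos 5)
Your overall strategy is exactly the paper's: use the simultaneous detectability by the \emph{same} $L$ to apply the quoted factorization theorem to both $(A,B,C)$ and $(A+\dA,B,C)$, take $\dN$, $\dM$ to be the differences of the resulting factors (membership in $\mhamz$ being immediate), and then identify those differences via the second resolvent identity. Up to the last step your write-up is sound, and your attention to the domain question (that $(sI-A-\dA-LC)^{-1}$ must map into $D(\dA)$ so that the composition is bounded) is a point the paper passes over silently.

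The problem is your final claim that the resolvent identity ``produces precisely the expressions in \eqref{eq-dndm}.'' It does not. With $T(s)=sI-A-LC$ and $T_\delta(s)=T(s)-\dA$, your (correct) identity $T_\delta^{-1}-T^{-1}=T^{-1}\dA\,T_\delta^{-1}$ yields
\begin{equation*}
    \dN(s)=C\,(sI-A-LC)^{-1}\dA\,(sI-A-\dA-LC)^{-1}B,
\end{equation*}
with $\dA$ sandwiched \emph{between} the two resolvents, whereas \eqref{eq-dndm} places $C\dA$ in front of both. Since $\dA$ does not commute with $(sI-A-LC)^{-1}$ in general, these are different operators. The discrepancy in fact exposes an error in the paper itself: the paper's proof invokes the ``operator identity'' $(sI-A-\dA-LC)^{-1}=(sI-A-LC)^{-1}+\dA(sI-A-LC)^{-1}(sI-A-\dA-LC)^{-1}$, which is not a valid identity for non-commuting operators---the perturbation must appear between the resolvents, exactly as in your version. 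So your derivation is the correct one, and the formulas \eqref{eq-dndm} (together with the cascade realization built on them later in the paper) ought to be corrected to the sandwiched form; but as a proof of the statement as literally written, your argument does not close the gap---it establishes a different (correct) formula, and the mismatch should have been flagged rather than asserted away.
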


\begin{proof}
    Applying formula \eqref{eq-curz-cpf-formula} for the coprime factors to the perturbed system, we obtain 
    \begin{align*}
        N(s)+\dN(s) &= C(sI-A-\dA-LC)^{-1}B ,\\
        M(s)+\dM(s) &= I + C(sI-A-\dA-LC)^{-1}L,
    \end{align*}
    so that the formulas \eqref{eq-dndm} for the differences follow from the operator identity
    \begin{equation*}
        (sI-A-\dA-LC)^{-1}= (sI-A-LC)^{-1}+ \dA(sI-A-LC)^{-1}(sI-A-\dA-LC)^{-1}
    \end{equation*}
    that holds under the assumption that $LC$ stabilizes both $A$ and $A+\dA$.
\end{proof}

The task is now to show that the size of $\dN, \dM$ smoothly depends on the size of the perturbation $\dA$ as this implies that for sufficiently accurate linearizations standard $\Hinf$ robust controller will stabilize the system. In fact, in the finite dimensional case, the needed robustness is proportional to $\| [\dN~\dM]\|_{\Hinf}$; see \cite{BenHW19}.

Since $A$ and $\dA$ are not bounded in general, we cannot use closeness of $A$ and $\dA$ directly to estimate $\dN$. We will rather use a time-domain realization of $\dN$ and $\dM$ to show that, under regularity conditions that we will establish for the Oseen system, the norms of associated input to output or state maps go to zero as the Oseen linearization error approaches zero.

We will make use of the following fundamental result
\begin{thm}[Thm. 1.3, \cite{Wei91}]\label{thm-weiss-transferfunc}
    Let $U$ and $Y$ be Banach spaces and let $\mathcal F$ be an shift-invariant bounded operator from $L^p([0, \infty), U)$ to $L^p([0, \infty), Y)$. Then there exists a unique bounded analytic $\mathcal L(U, Y)$-valued function $H$ defined on $\Czp$ such that for $u$ and $y$ with $y=\mathcal Fu$ it holds that $\hat y(s) = H(s)\hat u(s)$ on $\Czp$ and
    $$
    \sup_{s\in \Czp} \norm{H(s)}_{\mathcal L(U, Y)} \leq \norm{\mathcal F},
    $$
    where $\hat u$ and $\hat y$ are the \emph{Laplace}-transformed signals $u$ and $y$.
\end{thm}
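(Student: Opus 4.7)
The idea is to move to the Laplace domain, where the right shift $S_\tau$ becomes multiplication by $e^{-s\tau}$, and to show that an operator intertwining with the whole semigroup $\{S_\tau\}_{\tau\geq 0}$ must correspond to pointwise multiplication by an operator-valued function~$H$.

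\textbf{Step 1 (Laplace intertwining).} Let $S_\tau$ denote the right shift extended by zero, so that shift invariance reads $\mathcal F S_\tau = S_\tau \mathcal F$ for all $\tau\geq 0$. For $u\in L^p([0,\infty),U)$ the Laplace transform $\hat u(s)=\int_0^\infty e^{-st}u(t)\,\mathrm{d}t$ is well-defined and analytic on $\Czp$ and satisfies $\widehat{S_\tau u}(s) = e^{-s\tau}\hat u(s)$. Consequently, any multiplicative representation of $\mathcal F$ on the Laplace side is forced to be independent of the $e^{-s\tau}$ factor.

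\textbf{Step 2 (Candidate $H(s)$).} Fix $\lambda\in\Czp$ and $u_0\in U$, and consider the truncated-exponential test inputs
\[
u^{(T)}_{\lambda,u_0}(t) := e^{-\bar\lambda t}\,u_0\,\mathbf{1}_{[0,T]}(t),\qquad T>0,
\]
which lie in $L^p([0,\infty),U)$ with norms bounded uniformly in $T$ and whose Laplace transforms at $s=\lambda$ satisfy $\hat u^{(T)}_{\lambda,u_0}(\lambda) = u_0(1-e^{-2\Re\lambda\cdot T})/(2\Re\lambda) \to u_0/(2\Re\lambda)$ as $T\to\infty$. Using the $L^p$-boundedness of $\mathcal F$ to control $y^{(T)}:=\mathcal F u^{(T)}_{\lambda,u_0}$, I would set $H(\lambda)u_0 := 2\Re\lambda\cdot \lim_{T\to\infty}\hat y^{(T)}(\lambda)$ after showing that this limit exists (via uniform estimates and H\"older's inequality since $\Re\lambda>0$) and depends linearly and boundedly on $u_0$.

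\textbf{Step 3 (Identity, bound, analyticity).} The identity $\hat y(s) = H(s)\hat u(s)$ has to be propagated from these test inputs to general $u\in L^p$; for this I would use Step~1 together with density, expressing a dense class (e.g.\ compactly supported smooth $u$, whose Laplace transform is entire) as superpositions of time-translated exponentials and exploiting shift invariance together with continuity of $\mathcal F$. The norm bound $\sup_{s\in\Czp}\|H(s)\|_{\mathcal L(U,Y)}\leq \|\mathcal F\|$ follows by testing $\mathcal F$ on suitably normalized exponentials, making $\|u\|_{L^p}$ and $\|\hat u(s)\|_U$ comparable. Analyticity of $H$ on $\Czp$ comes from the fact that the limit-definition exhibits $H(\cdot)u_0$ as a locally uniform limit of $\mathcal L(U,Y)$-valued analytic functions (the truncated Laplace transforms of $y^{(T)}$), hence is analytic by a Vitali--Porter-type argument; uniqueness is then forced by the identity $\hat y(s)=H(s)\hat u(s)$ on the range of Laplace transforms.

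\textbf{Main obstacle.} The delicate step is the simultaneous proof that $H(s)$ is well-defined independently of the approximating test sequence and that the multiplicative identity holds on all of $L^p$, because for general Banach spaces $U,Y$ and $p\neq 2$ one lacks the isometric Paley--Wiener picture $L^2([0,\infty),U)\cong H^2(\Czp,U)$ that, in the Hilbert case, reduces the statement to a classical multiplier characterization (Sz.-Nagy--Foias). One must therefore combine careful density arguments with operator-valued Laplace inversion and a Vitali-type analyticity argument, which is precisely the technical core of Weiss's original proof in~\cite{Wei91}.
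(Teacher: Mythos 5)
First, a point of reference: the paper does \emph{not} prove this statement at all. It is imported verbatim, with attribution, as Theorem 1.3 of \cite{Wei91} and used as a black box, so there is no in-paper proof to compare against; your proposal has to stand on its own. As written, it does not: it has one structural gap and two steps that fail as stated.

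The structural gap is that the hypothesis $\mathcal F S_\tau = S_\tau \mathcal F$ never actually enters any computation. Your Step 1 only records how the Laplace transform interacts with shifts, which is a property of the transform, not of $\mathcal F$; Steps 2--3 then use only boundedness of $\mathcal F$. Since a generic bounded operator on $L^p([0,\infty),U)$ admits no multiplier representation, whatever fills Step 3 must use shift invariance essentially, and you explicitly defer exactly that step (``the technical core'') to the cited paper --- but that step \emph{is} the theorem. The repair is close to what you gesture at and is worth making explicit. For fixed $s\in\Czp$ set $N_s(u):=\widehat{\mathcal Fu}(s)-H(s)\hat u(s)$, a bounded map $L^p\to Y$. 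Shift invariance gives the covariance $N_s(S_\tau u)=e^{-s\tau}N_s(u)$, because $\widehat{\mathcal FS_\tau u}(s)=\widehat{S_\tau\mathcal Fu}(s)=e^{-s\tau}\widehat{\mathcal Fu}(s)$ while $\widehat{S_\tau u}(s)=e^{-s\tau}\hat u(s)$. By your construction $N_s$ vanishes on the full exponentials $e^{-\bar s\cdot}u_0$; writing $e^{-\bar s\cdot}\mathbf 1_{[T,\infty)}u_0=e^{-\bar sT}S_T\bigl(e^{-\bar s\cdot}u_0\bigr)$, the covariance forces $N_s$ to vanish on every truncated exponential $e^{-\bar s\cdot}\mathbf 1_{[0,T]}u_0$ as well, and the span of these (over $T>0$, $u_0\in U$) is dense in $L^p([0,\infty),U)$ (multiplication by $e^{\bar s\cdot}$ preserves compactly supported functions, and compactly supported step functions are dense), whence $N_s\equiv 0$. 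Without this covariance identity your density argument has nothing to propagate.

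Two steps fail as stated. (i) The norm bound: testing on the exponentials $e^{-\bar s\cdot}u_0$ and applying H\"older to $\hat y(s)$ yields $\|H(s)\|\le \frac{2}{p^{1/p}q^{1/q}}\|\mathcal F\|$ ($q$ the conjugate exponent), which equals $\|\mathcal F\|$ only when $p=2$; for $p=1$ it gives $2\|\mathcal F\|$. To get the claimed sharp constant one must apply the (already established) multiplicative identity to inputs $\phi u_0$ with the H\"older-extremal profile $\phi(t)=q\Re s\, e^{-(q-1)(\Re s)t+i(\Im s)t}$, normalized so that $\hat\phi(s)=1$; then $\|\phi\|_{L^p}=(q\Re s)^{1/q}$ and the two H\"older constants cancel exactly, giving $\|H(s)u_0\|\le\|\mathcal F\|\|u_0\|$. (ii) Analyticity: your approximants $\lambda\mapsto 2\Re\lambda\,\hat y^{(T)}(\lambda)$ are \emph{not} holomorphic --- the test input depends on $\lambda$ through $e^{-\bar\lambda t}$, which is antiholomorphic, and the prefactor $2\Re\lambda$ is not holomorphic either --- so no Vitali--Porter argument can be run on them. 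Analyticity instead follows from the multiplicative identity once it holds for all inputs: fixing any $\mu\in\Czp$ and $u_0\in U$, one has $H(s)u_0=(s+\mu)\,\widehat{\mathcal F(e^{-\mu\cdot}u_0)}(s)$ on $\Czp$, and the right-hand side is holomorphic in $s$ since it is the Laplace transform of a fixed $L^p$ function. So the order of reasoning must be reversed: the full multiplicative identity (via shift invariance) comes first, and both the sharp bound and the analyticity are consequences of it, not inputs to it.
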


With the help of Theorem \ref{thm-weiss-transferfunc}, we will establish existence of transferfunctions and estimates on their \Hinf-norm on the base of the input-to-state or input-to-output map $\mathcal F$ in the time domain.

\section{Regularity of the Oseen-Semigroup}
The \emph{Oseen operator} has been studied in similar contexts in \cite{Ray06, Bad06, NguR15}. We briefly review the basic analysis for the presented case of \emph{Robin-type} boundary conditions that can be modelled with an input of \emph{distributed type}, meaning that $Bu$ is bounded in the state space $\VO$. Further, we provide results on regularity of solutions as we will need it for the definition of transfer functions and their perturbations.

For the \emph{Oseen }system \eqref{eq-oseen-as-lti}, we have the following regularity result:

\begin{lem}\label{lem-oseen-reg-fintim}
    If 
    $u \in L^2(0,\infty; \mathbb R^{2} )$ then for the solution $v$ to the \emph{Oseen} system \eqref{eq-oseen-as-lti} with $v(0)=0$,
    it holds that for all $T>0$ 
    \begin{equation}\label{eq-oseensol-finite-time}
        \norm{v}_{L^2(0,T; \VI)} \leq 
        c_T\norm{u}_{L^2(0,T; \mathbb R^{2})} .
    \end{equation}
\end{lem}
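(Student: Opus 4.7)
The plan is to combine the boundedness of the input operator with a standard maximal parabolic regularity estimate for variational evolution equations. Since $B\in\mathcal L(\mathbb R^{2},\VO)$, for every $u\in L^2(0,T;\mathbb R^{2})$ the inhomogeneity $f:=Bu$ belongs to $L^2(0,T;\VO)$ with
\begin{equation*}
    \norm{f}_{L^2(0,T;\VO)}\le \norm{B}_{\mathcal L(\mathbb R^{2},\VO)}\,\norm{u}_{L^2(0,T;\mathbb R^{2})}.
\end{equation*}
The Oseen state equation in \eqref{eq-oseen-as-lti} can then be read as a variational Cauchy problem on the Gelfand triple $\VI\hookrightarrow \VO\cong(\VO)^*\hookrightarrow (\VI)^*$ whose sesquilinear form is, up to a shift in the $\VO$-norm, $\VI$-$\VO$-coercive by the arguments already used in the proof that $A$ generates an analytic semigroup.

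First I would invoke the classical Lions-type maximal regularity result for such triples: for every $f\in L^2(0,T;(\VI)^*)$ and every initial datum in $\VO$, the unique weak solution of $\dot v=Av+f$ lies in $L^2(0,T;\VI)\cap H^1(0,T;(\VI)^*)$ and satisfies
\begin{equation*}
    \norm{v}_{L^2(0,T;\VI)}^2+\norm{\dot v}_{L^2(0,T;(\VI)^*)}^2\le c_T\bigl(\norm{v(0)}_{\VO}^2+\norm{f}_{L^2(0,T;(\VI)^*)}^2\bigr).
\end{equation*}
Inserting $v(0)=0$ and using the continuous embedding $\VO\hookrightarrow (\VI)^*$ reduces this to
\begin{equation*}
    \norm{v}_{L^2(0,T;\VI)}\le c_T\,\norm{Bu}_{L^2(0,T;\VO)}\le c_T\,\norm{B}\,\norm{u}_{L^2(0,T;\mathbb R^{2})},
\end{equation*}
which, after absorbing $\norm{B}$ and the embedding constant into $c_T$, is precisely \eqref{eq-oseensol-finite-time}. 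The identification of the weak solution with the mild semigroup solution $v(t)=\int_0^t e^{(t-s)A}Bu(s)\,\textup ds$ is automatic because $A$ generates an analytic $C_0$-semigroup on $\VO$ and both notions coincide in this regularity class.

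The main obstacle will be the bookkeeping for the shift needed to compensate $b_\infty$: since $a_\gamma+b_\infty$ is only $\VI$-$\VO$-coercive rather than strictly $\VI$-coercive, one first passes to $w(t):=e^{-\lambda t}v(t)$ with $\lambda$ large enough that $a_\gamma+b_\infty+\lambda\,\ltp{\cdot}{\cdot}$ is $\VI$-coercive, applies the maximal regularity estimate to $w$ with forcing $e^{-\lambda t}Bu$, and then reabsorbs the factor $e^{\lambda T}$ into the final $T$-dependent constant. This also makes explicit why $c_T$ depends on $T$ rather than being uniform in $T$, which is consistent with the finite-time nature of the statement.
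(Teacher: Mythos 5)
Your proof is correct, and it reaches the estimate by a genuinely different route than the paper. The paper's proof is essentially a citation: since $Bu\in L^2(0,\infty;\VO)$, it invokes maximal $L^2$-regularity for analytic semigroups (\cite[Cor. 2.4.7]{Bad06} or \cite[Thm. II-2.2]{BenDDM07}), which places $v$ in $L^2(0,T;D(A))$ with $\dot v\in L^2(0,T;\VO)$, and then descends to the claimed bound via the algebraic and topological inclusion $D(A)\subset\VI$. You instead stay entirely at the variational level: the Gelfand triple $\VI\hookrightarrow\VO\hookrightarrow(\VI)^*$, the G\aa{}rding-type $\VI$-$\VO$-coercivity of $a_\gamma+b_\infty$ already established in the paper, and Lions' theorem for variational evolution equations, which delivers exactly $v\in L^2(0,T;\VI)\cap H^1(0,T;(\VI)^*)$ --- precisely the norm appearing in \eqref{eq-oseensol-finite-time} and nothing stronger. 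The trade-offs are clear: your argument does not use analyticity at all (boundedness plus coercivity of the form suffices) and tolerates forcings merely in $L^2(0,T;(\VI)^*)$, so for this particular estimate it is the more economical and slightly more general route, and it makes the $T$-dependence of $c_T$ explicit through the exponential shift; the paper's route is shorter on the page and yields the stronger $D(A)$-in-space, $H^1$-in-time regularity that is in line with the paper's repeated use of analyticity elsewhere, at the price of having to justify $D(A)\subset\VI$ topologically. Your two auxiliary steps --- the shift $w(t)=e^{-\lambda t}v(t)$ to restore strict coercivity, and the identification of the variational solution with the mild semigroup solution --- are both standard and correctly placed, so there is no gap.
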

\begin{proof}
    Since $B \in \mathcal L(\mathbb R^{2} ; \VO)$ we have that $Bu \in L^2(0,\infty; \VO)$ so that the regularity estimate follows from \cite[Cor. 2.4.7]{Bad06} or \cite[Thm. II-2.2.]{BenDDM07} noting that the general results treat $D(A)$ as a topological subspace of the pivot space and noting that $D(A)\subset \VI$ algebraically and topologically.
\end{proof}


Since $c_T$ in \eqref{eq-oseensol-finite-time} may depend on $T$, that estimate does not guarantee the bounded input-to-state map. However, for stable systems the estimates also hold for $T=\infty$. 

\begin{cor}\label{cor-bounded-itos}
    Consider $A$, $B$, and $C$ from the \emph{Oseen} system \eqref{eq-oseen-as-lti} and let $L\in \mathcal L(\mathbb R^{k} ,\VO)$ such that $A+LC$ is exponentially stable. Then for
    $u \in L^2(0,\infty; \mathbb R^{2} )$, the solution $v$ to 
    \begin{equation*}
        \dot v = (A+LC)v + Bu, \quad v(0)=0,
    \end{equation*}
    obeys
    \begin{equation*}
        \norm{v}_{L^2(0,\infty; \VI)} \leq 
        c_\infty \norm{u}_{L^2(0,\infty; \mathbb R^{2})} .
    \end{equation*}
\end{cor}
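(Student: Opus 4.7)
The plan is to upgrade the finite-time estimate of Lemma~\ref{lem-oseen-reg-fintim} to the half-line by exploiting the analyticity and the exponential stability of the shifted generator $\tilde A := A + LC$. Since $L\in\mathcal L(\mathbb R^k, \VO)$ and $C\in\mathcal L(\VO, \mathbb R^k)$ are bounded, $LC$ is a bounded operator on $\VO$, so $\tilde A$ is a bounded perturbation of the analytic generator $A$ and hence itself generates an analytic $C_0$-semigroup $T_{\tilde A}$ on $\VO$.

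I would then write the solution in mild form,
\begin{equation*}
    v(t) = \int_0^t T_{\tilde A}(t-s)\,B u(s)\,ds,
\end{equation*}
so that
\begin{equation*}
    \norm{v(t)}_{\VI} \leq \int_0^t \kappa(t-s)\,\norm{u(s)}_{\mathbb R^2}\,ds,
    \qquad
    \kappa(\tau) := \norm{T_{\tilde A}(\tau)B}_{\mathcal L(\mathbb R^2, \VI)}.
\end{equation*}
Young's convolution inequality for $L^1 * L^2 \to L^2$ then reduces the claim to the integrability $\kappa \in L^1(0,\infty)$, with the explicit constant $c_\infty = \norm{\kappa}_{L^1}$.

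Two standard ingredients yield $\kappa \in L^1$. Near $\tau=0$, the analyticity of $T_{\tilde A}$ on $\VO$, together with the identification of $\VI$ as a halfway interpolation space between $\VO$ and $D(\tilde A)$ (a consequence of the $\VI$-coercivity of $a_\gamma + b_\infty$ established in Section~\ref{sec-oseen-lti}), gives the smoothing estimate $\norm{T_{\tilde A}(\tau)}_{\mathcal L(\VO,\VI)} \leq c\tau^{-1/2}$ on $(0,1]$, and hence $\kappa(\tau) \leq c'\tau^{-1/2}$, which is integrable at zero. For $\tau \geq 1$ I would factor $T_{\tilde A}(\tau)B = T_{\tilde A}(1)\,T_{\tilde A}(\tau-1)B$ and combine the fixed smoothing factor $T_{\tilde A}(1) \in \mathcal L(\VO,\VI)$ with the exponential decay $\norm{T_{\tilde A}(\tau-1)}_{\mathcal L(\VO)} \leq M e^{-\omega(\tau-1)}$ to obtain $\kappa(\tau) \leq c'' e^{-\omega\tau}$. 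The two bounds together place $\kappa$ in $L^1(0,\infty)$.

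The only non-routine point is the $\VO$-to-$\VI$ smoothing estimate of the analytic semigroup, which is not spelled out by the previously cited analyticity theorem. This is a standard consequence of sectoriality---an analytic semigroup on a Hilbert space maps the ground space into any interpolation space $[\VO, D(\tilde A)]_\theta$ with rate $\tau^{-\theta}$---but it can equivalently be extracted from Lemma~\ref{lem-oseen-reg-fintim} itself by interpreting $\kappa$ as the kernel of the input-to-state map. A more abstract alternative is to invoke directly the half-line version of the maximal $L^2$-regularity theorem that underlies Lemma~\ref{lem-oseen-reg-fintim}, which is available for exponentially stable analytic semigroups on Hilbert spaces and applies to $\tilde A$ with input $Bu \in L^2(0,\infty;\VO)$.
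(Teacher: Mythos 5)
Your argument is correct, but it follows a genuinely different route from the paper. The paper's own proof is two citations long: since $LC\in\mathcal L(\VO,\VO)$, the operator $A+LC$ is a bounded perturbation of $A$ and therefore still generates an analytic $C_0$-semigroup (Pazy), and since this semigroup is exponentially stable, i.e.\ of negative type, the finite-time estimate of Lemma \ref{lem-oseen-reg-fintim} holds with $T=\infty$ by the half-line maximal regularity result \cite[Cor. 2.4.7]{Bad06}. That is exactly the ``more abstract alternative'' you mention in your closing paragraph; the paper takes it as the main (and only) argument. Your main argument instead realizes the input-to-state map as a convolution with the kernel $\kappa(\tau)=\norm{T_{\tilde A}(\tau)B}_{\mathcal L(\mathbb R^2,\VI)}$ and shows $\kappa\in L^1(0,\infty)$ by hand. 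This is more elementary and more informative: it avoids citing the maximal regularity theorem altogether and yields the explicit constant $c_\infty=\norm{\kappa}_{L^1}$. The price is the smoothing estimate $\norm{T_{\tilde A}(\tau)}_{\mathcal L(\VO,\VI)}\le c\tau^{-1/2}$, and this is the one place where your justification should be tightened: identifying $\VI$ with the interpolation space $[\VO,D(\tilde A)]_{1/2}$ is \emph{not} automatic here, because $\tilde A$ is not self-adjoint and for non-symmetric variational operators this identification is essentially a Kato square-root property, which can fail in the abstract setting. Fortunately you do not need it. Since $\tilde A=A+LC$ is associated with the bounded, G\aa rding-coercive form $\tilde a(v,w)=a_\gamma(v,w)+b_\infty(v,w)-\ltp{LCv}{w}$ on $\VI$, you get directly, for $z\in\VO$ and $0<\tau\le 1$,
\begin{equation*}
    c_a\norm{T_{\tilde A}(\tau)z}_{\VI}^2
    \leq \operatorname{Re}\tilde a\bigl(T_{\tilde A}(\tau)z, T_{\tilde A}(\tau)z\bigr) + \lambda \norm{T_{\tilde A}(\tau)z}_{\VO}^2
    = -\operatorname{Re}\ltp{\tilde A T_{\tilde A}(\tau)z}{T_{\tilde A}(\tau)z} + \lambda\norm{T_{\tilde A}(\tau)z}_{\VO}^2,
\end{equation*}
and the analyticity bound $\norm{\tilde A T_{\tilde A}(\tau)}_{\mathcal L(\VO)}\le c/\tau$ together with $\norm{T_{\tilde A}(\tau)}_{\mathcal L(\VO)}\le M$ then gives $\norm{T_{\tilde A}(\tau)z}_{\VI}\le c'\tau^{-1/2}\norm{z}_{\VO}$, which is the bound you wanted (this is also the honest version of your remark that the estimate is ``a consequence of the $\VI$-coercivity''). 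With that step repaired, the rest of your proof---Young's inequality $L^1 * L^2\to L^2$ on the half-line, and the factorization $T_{\tilde A}(\tau)B=T_{\tilde A}(1)T_{\tilde A}(\tau-1)B$ combined with exponential decay for $\tau\ge 1$---is sound.
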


\begin{proof}
    Since $C$ and $L$ are bounded $LC\in \mathcal L(\VO,\VO)$ is a bounded perturbation of $A$, so that $A+LC$ still generates an analytic $C_0$-semigroup; see \cite[Ch.3, Thm. 2.2]{Paz83}. Since $A+LC$ is stable, it is \emph{of negative} type, so that $T$ in \ref{eq-oseensol-finite-time} can be set to $T=\infty$, see \cite[Cor. 2.4.7]{Bad06}.
\end{proof}

\begin{rem}
    The estimates of Corollary \ref{cor-bounded-itos} and Lemma \ref{lem-oseen-reg-fintim} can also accomodate an initial condition $v(0)\ne 0$ of sufficient regularity. In view of the estimates of the transfer function, however, only the $v(0)=0$ case is of relevance. 
\end{rem}


\section{Perturbed linearization and robust stabilization}

In the case of a perturbation of the linearization point $\vstst\approx \vstst + \dv$  the corresponding Oseen operator \eqref{eq-weak-oseen-op} will be perturbed as $A\approx A + \dA$ where $\dA$ is defined via
\begin{equation}
    \ltp{\dA v}{w} = \int_\Omega (\dv \cdot \nabla)v\cdot w + (v\cdot \nabla)\dv\cdot w \inva x  \quad \text{for all }w\in \VI,
\end{equation}
with a domain of definition that contains $D(A)$. 

\begin{lem}\label{lem-da-to-zero}
    If $\dv\in \Hthe$ , then $\dA\in \mathcal L(\VI, \VO)$ and
    \begin{equation*}
        \norm{\dA}_{\mathcal L(\VI, \VO)}\to 0
    \end{equation*}
    as $\norm{\dv}_{\Hthe}\to 0$. 
\end{lem}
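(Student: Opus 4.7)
The plan is to show that the sesquilinear form defining $\dA$ in the statement is bounded on $\VI\times \VO$ with form norm controlled linearly by $\norm{\dv}_{\Hthe}$; the Riesz representation theorem then yields $\dA v \in \VO$ with $\norm{\dA}_{\mathcal L(\VI,\VO)} \leq c\norm{\dv}_{\Hthe}$, from which both conclusions of the lemma follow at once.

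First I would set up the two Sobolev embeddings that do the work. In two space dimensions $3/2+\varepsilon_0 > d/2$, so Morrey's inequality gives $\Hthe \hookrightarrow L^\infty(\Omega;\mathbb R^2)$, while $\nabla\dv \in H^{1/2+\varepsilon_0}(\Omega;\mathbb R^{2\times 2})$ embeds into $L^{4/(1-2\varepsilon_0)}$ and hence, on the bounded domain $\Omega$, into $L^4$, with
\begin{equation*}
    \norm{\dv}_{L^\infty} + \norm{\nabla \dv}_{L^4} \leq c\norm{\dv}_{\Hthe}.
\end{equation*}
Combined with the classical two-dimensional embedding $H^1(\Omega) \hookrightarrow L^4(\Omega)$, these are all the ingredients.

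Next I would bound the two convective integrals in the definition of $\dA$ by Hölder with exponent triples $(\infty,2,2)$ and $(4,4,2)$ respectively:
\begin{align*}
    \Bigl| \int_\Omega (\dv \cdot \nabla)v \cdot w \inva x \Bigr| &\leq \norm{\dv}_{L^\infty} \norm{\nabla v}_{L^2} \norm{w}_{L^2}, \\
    \Bigl| \int_\Omega (v \cdot \nabla)\dv \cdot w \inva x \Bigr| &\leq \norm{v}_{L^4} \norm{\nabla \dv}_{L^4} \norm{w}_{L^2}.
\end{align*}
Invoking the embeddings just stated together with $\norm{v}_{L^4} \leq c\norm{v}_{\VI}$, both right-hand sides are bounded by $c\norm{\dv}_{\Hthe}\norm{v}_{\VI}\norm{w}_{\VO}$. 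Since this estimate is continuous in $w$ with respect to the $\VO$ topology and $\VI$ is dense in $\VO$, the linear form $w \mapsto (\dA v, w)_{L^2}$ extends uniquely to $\VO$, and by Riesz its representative is the desired element $\dA v \in \VO$.

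Taking the supremum over $v$ in the unit ball of $\VI$ yields $\norm{\dA}_{\mathcal L(\VI,\VO)} \leq c\norm{\dv}_{\Hthe}$, giving both conclusions of the lemma. The one delicate point is the choice of exponents in the second term: $\nabla \dv$ has only fractional regularity $H^{1/2+\varepsilon_0}$ and thus cannot be controlled in $L^\infty$, forcing one to pair it with $v \in L^4$, which is precisely what the two-dimensional $H^1$-embedding supplies. In three dimensions the same scheme still works with $v \in L^6$ and $\nabla \dv \in L^3$, relying on $H^{1/2+\varepsilon_0}(\Omega;\mathbb R^{3\times 3}) \hookrightarrow L^3$, so the argument is essentially dimension-robust.
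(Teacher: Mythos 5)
Your proof is correct and follows essentially the same route as the paper: the paper bounds the same two convective integrals by invoking Temam's trilinear estimate with the index triples $(s_1,s_2,s_3)=(\tfrac{3}{2}+\varepsilon_0,0,0)$ and $(1,\tfrac{1}{2}+\varepsilon_0,0)$, and your explicit H\"older--Sobolev argument ($\dv\in L^\infty$ for the first term, $v\in L^4$ paired with $\nabla\dv\in L^4$ for the second, with the test function measured only in $L^2$) is precisely a hands-on proof of those two instances. Your duality step --- identifying $\VO$ with its dual, extending the form from $\VI$ to $\VO$ by density, and taking suprema over unit balls --- likewise matches the paper's argument.
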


\begin{proof}
    We will make use of the estimate 
    \begin{equation}\label{eq-esti-c}
        |\int_\Omega (z \cdot \nabla)v\cdot w \inva x| \le c_b 
        \|z\|_{H^{s_1}(\Omega;\mathbb R^{2})}  
        \|v\|_{H^{1+s_2}(\Omega;\mathbb R^{2})}
        \|w\|_{H^{s_3}(\Omega;\mathbb R^{2})}
    \end{equation}
    that holds for $s_1+s_2+s_3 > 1$; see \cite[Lem. 2.1]{Tem95}.

With the identification of $\VO={\VO}^*$ we have that
\begin{align*}
    \|\dA\|_{\mathcal L(\VI, \VO)} =& \sup_{w\in \VI, \|w\|=1} \|\dA w \|_{\VO}= \sup_{w\in \VI, \|w\|=1} \|\dA w \|_{{\VO}^*} \\
                                    &\leq \sup_{w\in \VI, \|w\|=1}\sup_{v\in \VO, \|v\|=1} |({\dA w}, {v})_{\VO}|\\
                                    &\leq \sup_{w\in \VI, \|w\|=1}\sup_{v\in \VO, \|v\|=1 }|\int_\Omega (\dv \cdot \nabla)v\cdot w + (v\cdot \nabla)\dv\cdot w \inva x|
\end{align*}
which by virtue of \eqref{eq-esti-c} and $(s_1, s_2, s_3)=(\frac{3}{2}+\varepsilon_0, 0, 0)$ for the first and $(s_1, s_2, s_3)=(1, \frac{1}{2}+\varepsilon_0, 0)$ for the second summand can be estimated to give
\begin{equation*}
    \|\dA\|_{\mathcal L(\VI, \VO)} \le c_{\dA}\|\dv\|_{\Hthe,
}\end{equation*}
which proves the result.
\end{proof}

\begin{cor}\label{cor-datdp-to-zero}
    Under the assumptions of Lemma \ref{lem-da-to-zero} it holds that
    \begin{equation*}
        \dA \in \mathcal L(L^2( 0, \infty;\VI), L^2( 0, \infty; \VO))
    \end{equation*}
    and $\norm{\dA}_{\mathcal L(L^2( 0, \infty;\VI), L^2( 0, \infty; \VO))}\to 0$ as $\norm{\dv}_{\Hthe}\to 0$.
\end{cor}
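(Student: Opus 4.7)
The plan is to lift the pointwise (in time) bound from Lemma \ref{lem-da-to-zero} to the Bochner space setting. Since $\dA$ as defined in the paper is a spatial differential operator that does not involve time, it acts on time-dependent functions $v\colon [0,\infty) \to \VI$ simply by pointwise evaluation $(\dA v)(t) := \dA(v(t))$. The first thing to check is that this pointwise definition indeed produces a strongly measurable $\VO$-valued function whenever $v \in L^2(0,\infty;\VI)$; this follows from the continuity of $\dA\colon \VI \to \VO$ together with the strong measurability of $v$, since the composition of a Bochner measurable function with a bounded linear operator is again Bochner measurable.

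Next, I would apply the pointwise bound from Lemma \ref{lem-da-to-zero} inside the time integral. For almost every $t\ge 0$ we have
\begin{equation*}
    \|(\dA v)(t)\|_{\VO} = \|\dA(v(t))\|_{\VO} \leq \|\dA\|_{\mathcal L(\VI,\VO)} \, \|v(t)\|_{\VI},
\end{equation*}
so squaring and integrating in $t$ gives
\begin{equation*}
    \|\dA v\|_{L^2(0,\infty;\VO)}^2 \leq \|\dA\|_{\mathcal L(\VI,\VO)}^2 \, \|v\|_{L^2(0,\infty;\VI)}^2.
\end{equation*}
This shows both that $\dA$ maps $L^2(0,\infty;\VI)$ into $L^2(0,\infty;\VO)$ and that its operator norm is bounded by $\|\dA\|_{\mathcal L(\VI,\VO)}$.

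Combining this estimate with the convergence statement of Lemma \ref{lem-da-to-zero}, namely $\|\dA\|_{\mathcal L(\VI,\VO)} \to 0$ as $\|\dv\|_{\Hthe} \to 0$, yields
\begin{equation*}
    \|\dA\|_{\mathcal L(L^2(0,\infty;\VI),L^2(0,\infty;\VO))} \to 0,
\end{equation*}
which is the claim. There is no substantial obstacle here; the only point requiring care is the measurability of $t \mapsto \dA(v(t))$, but this is standard. The content of the corollary is really just the observation that a time-independent bounded spatial operator lifts to a bounded multiplier on $L^2$-in-time Bochner spaces with the same (or smaller) norm, so smallness transfers from the spatial setting to the time-integrated one.
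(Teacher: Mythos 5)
Your proof is correct and is precisely the standard lifting argument that the paper leaves implicit: the paper states Corollary \ref{cor-datdp-to-zero} without proof, treating it as an immediate consequence of Lemma \ref{lem-da-to-zero} via exactly this pointwise-in-time application of the spatial operator and the resulting bound $\norm{\dA}_{\mathcal L(L^2(0,\infty;\VI),L^2(0,\infty;\VO))} \leq \norm{\dA}_{\mathcal L(\VI,\VO)}$. Your added care about strong measurability of $t\mapsto \dA(v(t))$ is a sensible (if routine) detail that the paper does not spell out.
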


\begin{rem}
    In three dimensions, the estimate \eqref{eq-esti-c} holds for $s_1 + s_2 + s_3 > \frac{3}{2}$. As the result of Lemma \ref{lem-da-to-zero} leaves no margin in the presented framework, the extension to three dimensions will either require additional regularity in $\vstst$ or lead to boundedness of $\dA$ with respect to a stronger norm than the \VI~norm.
\end{rem}

We make another assumption on the system.

\begin{ass}\label{ass-stable-output-injection}
    There exists an operator $L\in \mathcal L(\mathbb R^{k}, \VO)$ such that $A+\dA+LC$ generates an exponentially stable $C_0$-semigroup $S$, with $\|S(t) \|_{\VO}\leq Me^{-\omega t}$ , with constants $M>0$ and $\omega>0$ independent of $\dA$ for all perturbations $\dv$ with  $\norm{\dv}_{\Hthe} < \varepsilon$ for some $\varepsilon> 0$.
\end{ass}
\begin{rem}
    When considering robust output feedback stabilization it is necessary to assume that the unperturbed system is stabilizable and detectable. With the established boundedness of $B$ and the assumed boundedness of $C$, the existence of such a stabilizing output injection $L$ for the \emph{Oseen} system, follows, e.g., from the results on Riccati based controllers as provided in \cite[Ch. 6.2]{CurZ95}. As $A+LC$ is stabilized directly by the full state rather than through an observer, it has the stability margins of state feedback, cf. \cite{Doy78}. Thus stability for small perturbations $\dA$ may well be assumed and the uniformity in the constants can be achieved by worst case estimates.
\end{rem}

We can now state our main result.
\begin{thm}
    Consider the perturbed Oseen system \eqref{eq-oseen-as-lti} and let $L\in \mathcal L(\mathbb R^{k} ,\VO)$ and $\dA$ as in Assumption \ref{ass-stable-output-injection}.
    Then the associated transferfunction $\dG$ has a coprime factorization
    \begin{equation*}
        \dG = [N+\dN][M+\dM]^{-1},
    \end{equation*}
    where $NM^{-1}=G$ is the transferfunction associated with the unperturbed system, and
    \begin{equation*}
        \norm{\dN}_{\Hinf} \to 0 \quad\text{and}\quad \norm{\dM}_{\Hinf} \to 0
    \end{equation*}
    as $\dv \to 0$.
\end{thm}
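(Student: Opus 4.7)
The plan is to apply Theorem~\ref{thm-cpf-prtbtns} to exhibit the coprime factorization and then bound each of $\dN$ and $\dM$ in the $\Hinf$-norm by relating it to the operator norm of a time-domain cascade via Theorem~\ref{thm-weiss-transferfunc}. The starting point is to note that Assumption~\ref{ass-stable-output-injection}, applied at the given $\dA$ and (by considering $\dv=0$) also at $\dA=0$, yields exponential stability of both $A+LC$ and $A+\dA+LC$; hence Theorem~\ref{thm-cpf-prtbtns} delivers the factorization together with the explicit formulas \eqref{eq-dndm}.

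The next step is to interpret the formula
\begin{equation*}
    \dN(s) = C\dA(sI-A-LC)^{-1}(sI-A-\dA-LC)^{-1}B
\end{equation*}
as the transfer function of the shift-invariant cascade $u \mapsto z_1 \mapsto z_2 \mapsto y$, where $\dot z_1 = (A+\dA+LC)z_1 + Bu$ with $z_1(0)=0$, $\dot z_2 = (A+LC)z_2 + z_1$ with $z_2(0)=0$, and $y = C\dA z_2$. The cascade for $\dM$ is identical after replacing $B$ by $L$, and since $B$ and $L$ both map a finite-dimensional input space boundedly into $\VO$, the same argument will cover both cases.

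I would then chain four estimates on this cascade. First, the uniform exponential stability of the semigroup generated by $A+\dA+LC$ on $\VO$ from Assumption~\ref{ass-stable-output-injection}, together with the boundedness of $B$, gives $\|z_1\|_{L^2(0,\infty;\VO)} \leq c_1\|u\|_{L^2(0,\infty;\mathbb R^{2})}$ with $c_1$ independent of $\dv$. Second, the maximal regularity underlying Corollary~\ref{cor-bounded-itos} (which uses only that $A+LC$ generates an exponentially stable analytic semigroup and extends unchanged to $\VO$-valued forcings) lifts $z_1$ to $z_2 \in L^2(0,\infty;\VI)$ with $\|z_2\|_{L^2(0,\infty;\VI)} \leq c_2\|z_1\|_{L^2(0,\infty;\VO)}$. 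Third, Lemma~\ref{lem-da-to-zero} (or equivalently Corollary~\ref{cor-datdp-to-zero}) gives $\|\dA z_2\|_{L^2(0,\infty;\VO)} \leq \|\dA\|_{\mathcal L(\VI,\VO)}\|z_2\|_{L^2(0,\infty;\VI)}$, and boundedness of $C$ completes the chain to a bound on $\|y\|_{L^2(0,\infty;\mathbb R^{k})}$ by a constant multiple of $\|\dA\|_{\mathcal L(\VI,\VO)}\|u\|_{L^2(0,\infty;\mathbb R^{2})}$. Theorem~\ref{thm-weiss-transferfunc} then transfers this $L^2$-to-$L^2$ operator bound to $\|\dN\|_{\Hinf}$, and Lemma~\ref{lem-da-to-zero} drives $\|\dA\|_{\mathcal L(\VI,\VO)}\to 0$ as $\|\dv\|_{\Hthe}\to 0$, yielding $\|\dN\|_{\Hinf} \to 0$. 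The identical argument with $B$ replaced by $L$ handles $\dM$.

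The main obstacle I foresee is ensuring that the constant $c_1$ in the first cascade stage is uniform in $\dv$: this is precisely what Assumption~\ref{ass-stable-output-injection} is designed to supply, so the argument hinges on explicitly invoking that the decay constants $M$ and $\omega$ there are independent of $\dA$. A secondary technical point is that Corollary~\ref{cor-bounded-itos} is formulated for the particular input operator $B$, whereas the second cascade stage requires the analogous estimate for a generic $L^2(0,\infty;\VO)$-valued forcing; this extension, however, follows with no essential change from the analytic-semigroup maximal regularity cited in the proof of Lemma~\ref{lem-oseen-reg-fintim}.
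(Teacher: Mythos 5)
Your proposal is correct and takes essentially the same route as the paper's proof: the same cascade realization of $\dN$ (and of $\dM$ with $B$ replaced by $L$), the same chain of estimates---uniform stability of $A+\dA+LC$ with bounded $B$ for the first stage, the bound of Corollary \ref{cor-bounded-itos} for the second, and Corollary \ref{cor-datdp-to-zero} plus boundedness of $C$ for the output map---followed by the same passage to the $\Hinf$-norm via Theorem \ref{thm-weiss-transferfunc}. Your two flagged caveats are handled exactly as you suggest; indeed your second point, that Corollary \ref{cor-bounded-itos} must be extended from the input $Bu$ to a general $L^2(0,\infty;\VO)$-valued forcing, is slightly more careful than the paper, which invokes the corollary for this step without comment.
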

\begin{proof}

    By Theorem \ref{thm-cpf-prtbtns}, the perturbed transfer functions can be factorized as $
    \dG = [N+\dN][M+\dM]^{-1}$ with 
    \begin{align*}
        \dN(s) = C\dA(sI-A-LC)^{-1}(sI-A-\dA-LC)^{-1}B,\\
        \dM(s) = C\dA(sI-A-LC)^{-1}(sI-A-\dA-LC)^{-1}L.
    \end{align*}
    We realize $\dN(s) = C\dA(sI-A-LC)^{-1}(sI-A-\dA-LC)^{-1}B$ as a cascaded system
    \begin{equation}\label{eq-vone-sys}
        \dot v_1 = (A + \dA + LC) v_1 + Bu
    \end{equation}
    and 
    \begin{subequations}\label{eq-vone-to-y-sys}
    \begin{align}
        \dot v &= (A + LC) v + v_1 \label{eq-vone-to-y-sys-a}\\
        y &= C\dA v
    \end{align}
\end{subequations}
and show that the overall input $u$ to output $y$ map is bounded with a constant that approaches zero as $\dv\to 0$.
    
    For system \eqref{eq-vone-sys}, from the uniform stability of $A+\dA+LC$ and the boundedness of $B$, we infer the uniform boundedness of
    \begin{equation*}
        \mathcal F_1 \colon L^2((0, \infty); \mathbb R^{2} ) \to L^2((0, \infty); \VO)\colon u \mapsto v_1,
\end{equation*}
so that 
$$\|\mathcal F_1\|_{\mathcal L (L^2((0, \infty)\mathbb R^{2});L^2((0,\infty);\VO))}<c_1$$
with a constant $c_1$ independent of $\dA$. 

By Corollary \ref{cor-bounded-itos}, we conclude that system \eqref{eq-vone-to-y-sys} has a bounded input to state map
\begin{equation*}
    \mathcal F_2 \in \mathcal L(L^2( (0,\infty);\VO);L^2( (0,\infty);\VI) ) \colon v_1 \mapsto v.
\end{equation*}
Note that $\mathcal F_2$ is defined by \eqref{eq-vone-to-y-sys-a} only so that its norm is independent of $\dA$.

By Corollary \ref{cor-datdp-to-zero} and by the boundedness of $C$, we have that $C\dA$ is bounded as a linear map from $\VI$ in $\mathbb R^{k}$ with a norm that approaches zero as $\dv$ goes to zero. 

Putting all together, we infer for the input to output map $\mathcal F= C\dA\mathcal F_2\mathcal F_1\colon u \mapsto y$ via the connection of Systems \eqref{eq-vone-sys} and \eqref{eq-vone-to-y-sys} that
    \begin{align}
        \norm{\mathcal F}_{\mathbb R^{k}\leftarrow \mathbb R^{2}}
        &\leq 
        \|C\|_{\mathbb R^{k} \leftarrow \VO }
        \|\dA\|_{\VO \leftarrow \VI}
        \|\mathcal F_2 \|_{\VI \leftarrow \VO}
        \|\mathcal F_1\|_{\VO \leftarrow \mathbb R^{2} },
    \end{align}
    where we have used, e.g., $\VI \leftarrow \VO$ as a shortcut for $\mathcal L(L^2( (0,\infty);\VO);L^2( (0,\infty);\VI) )$.
    Thus, by Theorem \ref{thm-weiss-transferfunc}, we obtain that 
    \begin{equation*}
        \|\dN\|_{\Hinf} \le \|\mathcal F\|_{\mathcal L(L^2( (0,\infty);\mathbb R^{2} );L^2( (0,\infty);\mathbb R^{k} ) )} \to 0
\end{equation*}
as $\norm{\dv}_{\Hthe}\to 0$ and, thus, $\|\dA\|_{\mathcal L(L^2( (0,\infty);\VO);L^2( (0,\infty);\VI) )}\to 0$. 

Since $\dN$ and $\dM$ only differs in \emph{their input operator} in the corresponding first system \eqref{eq-vone-sys} and since $L$ is bounded as is $B$, the statement that $\|\dM\|_{\Hinf} \to 0$  as $\norm{\dv}_{\Hthe}\to 0$, follows analoguously.
\end{proof}

\section{Conclusion}

We have presented a general approach to qualify the linearization errors in the controller design via coprime factor uncertainties. For the \emph{Oseen} system, we have shown that the disturbance in the coprime factors smoothly depend on the disturbance in the linearization. The developed realization of the coprime factor disturbances may even be used to quantify the disturbances. Accordingly, the coprime factor uncertainties approach zero in the relevant norm when the linearization gets more accurate. Thus the model will eventually reach a region, where a robust controller based on the inexact model will stabilize the actual system. 

Although we have considered a particular setup, the arguments were general enough to be adapted to similar problems. In view of three dimensional setups, we have pointed out where the possible lack of regularity of solutions needs to be compensated.

The presented results were exclusively for the infinite dimensional system. Since it is known that discretization errors lead to similar coprime factors uncertainties, see, e.g. \cite{Cur06, BenH17}, the results may become relevant for the controller design by space-discrete models. In order to define finite dimensional controllers that can compensate both linearization and discretization errors, among others, the interplay of these error sources needs further investigation. Another urgent issue is the existence of uniformly -- with respect to the discretization -- stabilizing feedback that is commonly assumed to show convergence of control laws; see, e.g. \cite{Ito87} and the remarks in \cite{BenH17} on the difficulties that come with the nonstandard Galerkin approximations as they are used for flow equations.

Finally, it seems worth investigating how the provided results can be extended to treat Dirichlet boundary feedback control as in \cite{Ray06, Ray07a} without resorting to the Robin relaxation.


\bibliographystyle{abbrv}
\bibliography{nse-inf-arxiv.bib}
\end{document}